\documentclass[10pt]{amsart}
\usepackage{fullpage}
\usepackage[centertags]{amsmath}
\usepackage{amsfonts}
\usepackage{amsthm}
\usepackage{newlfont}
\usepackage{amscd}
\usepackage{amsgen}
\usepackage{amssymb}
\usepackage{easybmat}
\newlength{\defbaselineskip} \setlength{\defbaselineskip}{\baselineskip}

 \theoremstyle{plain} \newtheorem{thm}{Theorem}[section]
 
\newtheorem{prop}[thm]{Proposition}

\theoremstyle{definition}

\newtheorem{rem}[thm]{Remark}

 \numberwithin{equation}{section}
\numberwithin{equation}{section}
\theoremstyle{definition} \newtheorem{ex}{Example}[section]

%opening
\title{Mirror symmetry for Pfaffian Calabi-Yau 3-folds via conifold transitions}
\author{Micha{\l}\ Kapustka}
\keywords{Calabi-Yau threefolds, mirror symmetry, toric degenerations,
Pfaffian varieties}
\subjclass[2000]{Primary: 14J32, 14J33}
\begin{document}

\begin{abstract}In this note we construct conifold transitions between several
Calabi-Yau threefolds given by Pfaffians 
in weighted projective spaces and Calabi-Yau threefolds appearing as complete
intersections in toric varieties. We use the obtained results to predict mirrors
following ideas of \cite{BCKS, Batsmalltoricdegen}. 
In particular, we consider the family of Calabi--Yau threefolds of degree 25 in
$\mathbb{P}^9$
obtained as a transverse intersection of two Grassmannians in their Pl\"ucker
embeddings.
\end{abstract}

\maketitle
\section{Introduction}
Calabi--Yau threefolds with Picard number one are central objects of
investigation form the point of view of mirror symmetry. The main reason for
this is that for such a manifold the mirror family 
has one dimensional moduli,  and hence can be explicitly studied. 
There are nowadays more than 100 known families of Calabi--Yau threefolds of
Picard number 1. The simplest are complete intersection in toric varieties and
for them the mirror symmetry conjecture has been proven.
Others,  and in fact most of them,  appear as smoothings of hypersurfaces in
some toric Gorenstein terminal Fano fourfolds (see \cite{BatyrevKreuzer}). 
For these a conjectural mirror construction has been developed.

More precisely basing on ideas of \cite{Morrison} in \cite{BCKS,  BCKS2} a
conjectural method for the construction of mirrors for Calabi--Yau threefolds
admitting conifold transitions to complete intersection of toric varieties has
been stated.
Since then toric degenerations of Fano manifolds and degenerations of Calabi-Yau
threefolds
to complete intersections in toric varieties has been widely investigated aiming
at understanding mirror symmetry for new classes of examples.

Recently in \cite{MichUnpr, Kanazawa} new families of non-complete intersection
Calabi-Yau threefolds with Picard number 1 have been explicitly constructed.
They will be denoted $\mathcal{X}_5$,  $\mathcal{X}_7$,  $\mathcal{X}_{10}$ and
$\mathcal{X}_{25}$.
They are described by Pfaffian equations in some weighted projective spaces.
According to the classification of \cite{BatyrevKreuzer} they do not admit any
conifold transition to a hypersurface in a toric variety. 

In \cite{Kanazawa} the families $\mathcal{X}_5$,  $\mathcal{X}_7$, 
$\mathcal{X}_{10}$ have also been studied,  together with the classical family
$\mathcal{X}_{13}$ of Calabi--Yau threefolds of degree 13 in $\mathbb{P}^6$, 
from the point of view of mirror symmetry.
The method used relies on the tropicalization approach introduced in
\cite{Boehm}.
In this way all these examples have been assigned a candidate mirror family and
the period of these families has been computed. The singularities of the
elements of the mirror families proposed are however very complicated. In
particular, it is not clear
whether general elements of the mirror families proposed admit resolutions being
Calabi--Yau threefolds. 

In this note, we study mirror symmetry for all examples $\mathcal{X}_5$,
$\mathcal{X}_7$,  $\mathcal{X}_{10}$,  $\mathcal{X}_{25}$ and $\mathcal{X}_{13}$
using the methods of \cite{BCKS, Batsmalltoricdegen}. 
We start by interpreting the description of these Calabi--Yau threefolds as
complete intersection in some singular Fano varieties related to weighted
Grassmannians. 
By constructing toric degenerations of these ambient varieties, we describe
conifold transitions between the families $\mathcal{X}_5$, $\mathcal{X}_7$, 
$\mathcal{X}_{10}$, $\mathcal{X}_{13}$, $\mathcal{X}_{25}$,  and some Calabi-Yau
complete intersections in
toric varieties. We build on the well known toric degeneration of the
Grassmannians $G(2, 5)$
described in \cite{Sturm} and used in \cite{BCKS}. More precisely, we adapt it to
the case of any variety described by Pfaffians of a skew-symmetric $5\times5$
matrix in weighted projective space. 
We next use the methods of \cite{BCKS, Batsmalltoricdegen} to compute the main
period of the conjectured mirror family.
In this way, we recover the same periods as in \cite{Kanazawa} for the examples
studied there. One of the advantage of taking our approach is that our
constructions involving only conifold transitions leads to candidate mirror 
families consisting of singular Calabi--Yau threefolds which conjecturally (see
\cite{Morrison,  BCKS}) admit only nodes as singularities and hence can be
resolved to smooth Calabi--Yau threefolds. Furthermore the method of constructing mirrors via conifold transitions
has good chances to be proved in a general context (see \cite{RuddatSiebert}).

Moreover, our method works also for the family $\mathcal{X}_{25}$ consisting of
Calabi--Yau threefolds obtained as  complete 
intersections of two Grassmannians in their Plucker embeddings. Our approach to
this case can be extended to work for other Calabi-Yau threefolds appearing as
intersections of two Fano varieties 
each admitting a small toric degeneration.

The family $\mathcal{X}_{25}$ consisting of codimension 6 smooth Calabi--Yau
threefolds of degree 25 in $\mathbb{P}^9$ is especially interesting because its
associated Picard--Fuchs equation appears to be self dual in the sense of
\cite{Rodland, Kanazawa}. 
This phenomenon is related to projective self-duality of the Grassmannian
$G(2,5)$ and merits further investigation. We hope that the results obtained in
this paper will contribute to it. For this reason we provide also an alternative approach valid only for this family.
It is based on a construction of a smooth degeneration of the Calabi-Yau manifolds from $\mathcal{X}_{25}$ to Calabi--Yau manifolds 
obtained as zero loci of a vector bundle on $G(2,5)$. This permits the family $\mathcal{X}_{25}$ to be investigated by methods 
specific to zero loci (forthcoming work of S. Galkin).

Throughout the paper, we rely on computer calculations. We use mainly the Toric
Package from the computer algebra system Magma (see \cite{magma}).
\section{Descriptions of studied families}\label{sec constructions}
In this section, we recall and study descriptions of the families
$\mathcal{X}_5$, $\mathcal{X}_7$,  $\mathcal{X}_{10}$ and $\mathcal{X}_{25}$ of
Calabi-Yau threefolds with Picard number one constructed in
\cite{MichUnpr, Kanazawa}. For each of them we provide a description in five
different ways:
\begin{enumerate}
 \item in terms of Pfaffian varieties associated to decomposable vector bundles
in weighted projective spaces, 
 \item using Pfaffian equations in weighted projective space, 
 \item as a result of a bitransition based on Kustin--Miller unprojection, 
 \item as a complete intersection in some cone over some universal Pfaffian
variety, 
 \item as the result of a conifold transition with a complete intersection in a
smooth toric Fano variety. 
\end{enumerate}
Each of the above description has its own advantages. 
Descriptions (1) and (2) are strictly related and very explicit,  they are used
for the definition of the families. In fact the description of varieties using
Pfaffians is in general one of the simplest
after descriptions as complete intersections and by zero loci of sections of
ample vector bundles. 
As such they can be used for the study of the geometry of varieties involved.
However from the point of view of mirror symmetry the description via Pfaffians
does not help much. 
In particular, it seams very improbable that a version of Quantum Lefschetz
theorem could be developed in this case,  since there are examples of Pfaffian varieties for which the assertion of the standard Lefschetz theorem fail.
However, having explicit equations, one can always try to find explicit
degenerations suitable for different approaches to mirror symmetry.

Description (3) tells us about the place the Calabi--Yau threefolds in question
take in the Web of Calabi--Yau threefolds. It was introduced in \cite{MichUnpr}.
It relates our varieties with very standard Calabi--Yau threefolds by
composition of a conifold transition and a geometric transition 
involving a type II primitive contraction morphism. Such constructions are
conjecturally (see \cite{Morrison}) compatible with mirror symmetry,  so in
principle could lead to the construction of a mirror family for our examples.
However contrary to 
conifold transitions the geometric transition involving a primitive contraction
of type II has not yet found a proper counterpart in this theory.

Description (4) is already more suited for mirror symmetry in general. Since in
some instances the quantum Lefschetz theorem on singular varieties holds one can
in principle reduce the study of mirror symmetry of our Calabi--Yau threefold to
the 
mirror symmetry of the ambient variety in question. In our cases the latter has
not yet been studied. It seams however probable that since the ambient varieties
obtained in our cases are strictly related to weighted Grassmannians one could 
generalize the theory developed for Grassmannians to study the quantum
cohomology ring of these varieties. In this paper, we shall just use the analogy
with the Grassmannian to construct a terminal Toric degeneration of our ambient
space and get description (5).

The last description is the one that we shall use to study mirror symmetry for
our examples. It provides a setting in which the methods of \cite{BCKS}
conjecturally work. 
The constructions for all considered families are very similar. We shall write
in details only the cases $\mathcal{X}_5$. For the remaining we present only the
main results and point out where some differences to the case $\mathcal{X}_5$
occur. 
Moreover, the construction (3) using bitransitions is only performed for the case
$\mathcal{X}_5$ which is the only case which was not treated in \cite{MichUnpr}.
For the remaining cases a detailed analysis of the construction can be found
there.
\subsection{The family $\mathcal{X}_5$}
A Calabi-Yau threefold $X_5\in \mathcal{X}_5$ is naturally embedded in
$\mathbb{P}(1^4, 2^3)$.
It is defined as a Pfaffian variety associated to the vector bundle:
$$\mathcal{E}_5=5 \mathcal{O}_{\mathbb{P}{(1^4, 2^3)}}(1).$$
In other terms it is defined by $4\times4$ Pfaffians of a $5\times5$
antisymmetric matrix  with entries of weighted degree 2.
We shall use the following picture to illustrate the weights in the skew
symmetric matrix defining $X_5$ 

$$\left(\begin{array}{ccccc}
    &2&2&2&2\\
 & &2&2&2\\
 & & &2&2\\
 & & & &2\\
 & & &&\\
\end{array}\right).$$
    
The threefold $X_5$ can also be described as a smoothing of a variety obtained
as the result of a Kustin--Miller unprojection of a quadric surface in a
Calabi-Yau threefold complete intersection of two quartics in
$\mathbf{P}=\mathbb{P}(1^4, 2^2)$. More precisely let $D$ be a quadric surface
embedded in $\mathbb{P}(1^4, 2^2)$ as a complete intersection of type $(2, 2,
2)$ i.e. defined by three polynomials $q_1$,  $q_2$,  $q_3$ of weighted degree
2. 
Let us consider a general variety $Y$ obtained as the complete intersection of
two quartics containing $D$ i.e. $Y$ is defined by two polynomials of the form
$p_1=a_1q_1+a_2q_2+a_3q_3$ and $p_2=b_1q_1+b_2q_2+b_3q_3$, 
where $a_i$, $b_i$ are general polynomials of weighted degree 2. Let us now
consider the space  $\mathbb{P}(1^4, 2^3)$ containing $\mathbf{P}$ with the new
weight 2 variable denoted by $l$.
Then the variety $Z$ defined by the $4\times 4$ Pfaffian of the matrix 
$$\left(\begin{array}{ccccc}
    &l&a_1&a_2&a_3\\
 & &b_1&b_2&b_3\\
 & & &q_3&-q_2\\
 & & & &q_1\\
 & & & &\\
\end{array}\right), $$
is a Gorenstein Calabi--Yau threefold whose projection from the point $p$ with
$l(p)=1$ and the remaining coordinates being zero is $Y$. We can easily prove
that this projection factors through a small resolution of nodes on $Y$ and a
primitive contraction of a quadric surface. We moreover observe that $Y$ has a
smoothing to the family of Calabi--Yau threefolds obtained as intersections of
general quartics in $\mathbf{P}=\mathbb{P}(1^4, 2^2)$ whereas $Y$ is smoothed 
by the family $\mathcal{X}_{5}$

The description of the threefold $X_5$ using Pfaffian equations enables us to
see $X_5$ as a general complete intersection of a variety 
$G_5$ described by equations in $\mathbb{P}(1^4, 2^{10})$ given by $4\times 4$
Pfaffians of a matrix $M$ with weight 2 coordinates 
as entries. The variety $G_5$ can be interpreted as a weighted cone over the
Grassmannian $G(2, 5)$.

Observe that $G_5$ is a normal Gorenstein Fano variety. Let us now consider the
following degeneration of $G_5$. 
Let $\mathcal{G}$ be the family defined in $\mathbb{P}(1^4, 2^{10})\times
\mathbb{C}$ with coordinates $x_1, \dots x_4,  y_1, \dots y_{10}$ by the
Pfaffians of the matrix

$$\left(\begin{array}{ccccc}
    &\lambda y_1&y_2&y_3&y_4\\
 & &y_5&y_6&y_7\\
 & & &y_8&y_9\\
 & & & &\lambda y_{10}\\
 & & & &\\
\end{array}\right).$$

\begin{prop}\label{flat and fan}
The family $\mathcal{G}$ is flat over $\mathbb{C}$. Moreover the fiber
$F_5=\mathcal{G}_{0}$ over $\lambda=0$ is a terminal Gorenstein toric Fano
variety of Picard number one. 
\end{prop}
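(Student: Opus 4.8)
The plan is to compute the five $4\times4$ Pfaffians of the displayed matrix explicitly, observe that setting $\lambda=0$ turns them into binomials, and then run a two-part argument: Buchsbaum--Eisenbud structure theory for flatness, and the combinatorics of an explicit polytope for the toric statement. Writing $a_{12}=\lambda y_1$, $a_{13}=y_2$, $a_{14}=y_3$, $a_{15}=y_4$, $a_{23}=y_5$, $a_{24}=y_6$, $a_{25}=y_7$, $a_{34}=y_8$, $a_{35}=y_9$, $a_{45}=\lambda y_{10}$, the Pfaffian obtained by deleting index $k$ is the usual three-term expression, and at $\lambda=0$ exactly the term containing $a_{12}$ or $a_{45}$ drops out, leaving
\begin{equation*}
y_7y_8-y_6y_9,\quad y_4y_8-y_3y_9,\quad y_4y_6-y_3y_7,\quad y_4y_5-y_2y_7,\quad y_3y_5-y_2y_6 .
\end{equation*}
These are precisely the two-term initial forms of the Pl\"ucker relations in the Sturmfels/SAGBI toric degeneration of $G(2,5)$ used in \cite{BCKS, Sturm}; note that the weight-one coordinates $x_1,\dots,x_4$ and the coordinates $y_1,y_{10}$ do not appear, so $F_5$ is a cone over the toric degeneration of the cone over $G(2,5)$.

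For flatness I would use the Buchsbaum--Eisenbud theorem. Working in $S[\lambda]$ with $S=\mathbb{C}[x_1,\dots,x_4,y_1,\dots,y_{10}]$, the ideal $I_\lambda$ generated by the $4\times4$ Pfaffians of a $5\times5$ skew-symmetric matrix has grade at most $3$, and whenever the grade equals $3$ the ideal is perfect and $S[\lambda]/I_\lambda$ is Cohen--Macaulay, resolved by the Buchsbaum--Eisenbud complex. The single computational input I need is that the special fiber $F_5$ has codimension $3$, i.e. $\dim F_5=10$; this will come out of the toric description below. Granting it, the total space $\mathcal{G}$ has dimension one greater than $F_5$, so $\{\lambda=0\}$ contains no irreducible component of $\mathcal{G}$; since $S[\lambda]/I_\lambda$ is Cohen--Macaulay it has no embedded primes, hence $\lambda$ lies in no associated prime and is a non-zero-divisor. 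Over the smooth one-dimensional base this is exactly flatness.

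For the toric statement I would first argue that the binomial ideal above is prime and defines an irreducible normal toric variety: this is the content of the Sturmfels degeneration of $G(2,5)$, and the extra cone coordinates only contribute an affine toric factor. From the binomial relations together with the grading (weights $1,1,1,1$ on the $x_i$ and $2$ on the $y_j$) I would read off the lattice, the semigroup and hence the fan/polytope $\Delta$ of $F_5$, exactly as implemented in the Toric package of \cite{magma}. The remaining assertions are then polytope combinatorics: $F_5$ is Gorenstein Fano iff $\Delta$ is reflexive; it is terminal iff the associated fan polytope has no lattice points other than the origin and its vertices; and it has Picard number one iff the number of rays of the fan equals $\dim F_5+1=11$, i.e. $F_5$ is a fake weighted projective space. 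I would verify each of these by the explicit computation, which simultaneously confirms the value $\dim F_5=10$ used above.

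The main obstacle is this last combinatorial verification. Flatness is essentially formal once Buchsbaum--Eisenbud applies, and the identification of the $\lambda=0$ equations with the Sturmfels degeneration is standard; the real work lies in producing the explicit fan of $F_5$ and checking the three conditions---reflexivity, terminality, and the ray count for Picard number one---simultaneously. In particular terminality (the absence of extra lattice points) and the Gorenstein/reflexive condition are the delicate points, and these are where the reliance on the Magma computation enters.
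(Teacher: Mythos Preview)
Your overall strategy---Buchsbaum--Eisenbud for flatness, then explicit polytope combinatorics verified in Magma---is essentially the paper's own argument, and the flatness part is fine. There is, however, a genuine error in your criterion for Picard number one. You assert that $F_5$ has Picard number one iff the number of rays of its fan equals $\dim F_5+1=11$, i.e.\ iff $F_5$ is a fake weighted projective space. That equivalence is valid only for \emph{simplicial} toric varieties, where $\operatorname{Pic}$ and $\operatorname{Cl}$ have the same rank. In the paper's actual computation the fan of $F_5$ has $13$ rays, so $\operatorname{rk}\operatorname{Cl}(F_5)=13-10=3$; the Picard number is nonetheless $1$ because $F_5$ is non-simplicial (it has two non-simplicial cones producing codimension-$3$ singular strata) and only a rank-one sublattice of Weil divisors is Cartier. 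If you carried out your plan as written you would find $13$ rays and, by your own test, wrongly conclude $\rho(F_5)\neq 1$.

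The fix is to replace the ray count by the correct check: compute which torus-invariant Weil divisors are Cartier by solving, for each maximal cone $\sigma$, the linear system $\langle m_\sigma,e_i\rangle=-a_i$ for $e_i\in\sigma$ and imposing compatibility across cones; equivalently, ask Magma for the Picard lattice of the toric variety built from the polytope. Everything else in your proposal (including relying on the computer for terminality and reflexivity) matches the paper.
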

\begin{proof} We start the proof with the observation that $F_5$ is of expected
codimension in $\mathbb{P}(1^4, 2^{10})$ it is hence a (weighted) Pfaffian
variety.
For flatness of $\mathcal{G}$ we then just need to observe that it is clearly an
algebraic family and that the Hilbert polynomial are computed from the same
Pfaffian sequence (cf. \cite{Kanazawa}). 
To get assertions concerning the fiber $F_5=\mathcal{G}_{0}$ we first need to
prove that it is normal. 
Since it is Gorenstein by the Pfaffian construction,  normality is equivalent to
the computation of the codimension of the singular locus. The latter is done
easily by computer. Since the Pfaffians of the degenerate matrix
provide binomial equations for $F_5$ normality implies that $F_5$ is a toric
variety. The rest of the assertion follows 
from computer calculation using the Toric Package in Magma on the fan of
$F_5=\mathcal{G}_{0}$ determined by the binomial equations. 
More precisely we compute that the polytope associated to $F_5=\mathcal{G}_{0}$
polarized by the restriction of $\mathcal{O}_{\mathbb{P}(1^4, 2^{10})}(1)$ is a
reflexive polytope generated by:
 $$\begin{array}{rrrrrrrrrrrr}
       e_1&=&(-1, & 0, & -1, & 0, & 0, & 0, & 0, & 0, & 0, & -1), \\
  e_2&=& (0, & -1, & 0, & 0, & 0, & 0, & 0, & 0, & 0, & 1), \\
  e_3&=&  (0, & -1, & 1, & 0, & 0, & 0, & 0, & 0, & 0, & 0), \\
  e_4&=&  (0, & 0, & 0, & 0, & 0, & 0, & 0, & 0, & 1, & 0), \\
  e_5&=&  (0, & 0, & 0, & 0, & 0, & 0, & 0, & 1, & 0, & 0), \\
  e_6&=&  (0, & 0, & 0, & 0, & 0, & 0, & 1, & 0, & 0, & 0), \\
  e_7&=&  (0, & 0, & 0, & 0, & 0, & 1, & 0, & 0, & 1, & 0), \\
  e_8&=&  (0, & 0, & 0, & 0, & 0, & 1, & 1, & 0, & 0, & 0), \\
  e_9&=&  (0, & 0, & 0, & 0, & 1, & 0, & 0, & 1, & 0, & 0), \\
  e_{10}&=&  (0, & 0, & 0, & 0, & 1, & 1, & 1, & 0, & 0, & 0), \\
  e_{11}&=&  (0, & 0, & 0, & 1, & 0, & 0, & 0, & 0, & 0, & 0), \\
  e_{12}&=&  (0, & 1, & 0, & -1, & -1, & -1, & -1, & -1, & -1, & 0), \\
  e_{13}&=&  (1, & 0, & 0, & 0, & 0, & 0, & 0, & 0, & 0, & 0)
   \end{array} 
$$
It follows by further calculations with Magma that the toric variety $F_5$ is a
terminal Gorenstein variety with Picard number one with two singular strata of
codimension 3 corresponding to the cones generated by $(e_5, e_8, e_9, e_{10})$
and $(e_4, e_6, e_7, e_8)$. 
\end{proof}
From the proof of Proposition \ref{flat and fan} we obtain that the intersection
of $F_5$ with 7 general hypersurfaces of weighted degree 2 is a nodal
Calabi--Yau threefold $T_5$.
Observe that $T_5$ admits a small resolution $\tilde{T}_5$ to a complete
intersection in a toric variety obtained as a toric resolution $\tilde{F}_5$ of
$F_5$ (with Fan given by a triangulation of our polytope).
This means that $X_5$ and $\tilde{T}_5$ are connected by a conifold transition.

\subsection{The family $\mathcal{X}_7$}
The Calabi-Yau threefold $X_{7}$ is described in $\mathbb{P}(1^5, 2^2)$ by
$4\times4$ Pfaffians of a $5\times5$ antisymmetric matrix with 
entries of degrees as shown below: 
 $$\left(\begin{array}{ccccc}
    &1&1&2&2\\
& &1&2&2\\
& & &2&2\\
& & & &3\\
& & & &\\
\end{array}\right)$$
We can consider $X_7$ as complete intersection of hypersurfaces of degrees
$2$,  $2$,  $2$,  $2$,  $3$ and the variety $G'_7\subset \mathbb{P}(1^5, 2^6,
3)$ defined by the
$4\times 4$ Pfaffians of the 
antisymmetric matrix with entries being coordinates of suitable degree. Now
similarly to the case $G(2, 5)$ we can find a toric degeneration of $G'_7$. The
latter is however not Gorenstein.
To obtain a degeneration which is Gorenstein we consider $G_7$ to be the variety
defined by Pfaffians of 
a generic matrix of the form 
 $$\left(\begin{array}{ccccc}
    &x_1&x_2&y_1&y_2\\
& &x_3&y_3&y_4\\
& & &y_5&y_6\\
& & & &c\\
& & & &\\
\end{array}\right)$$
in $\mathbb{P}(1^5, 2^6)$ where $x_1,x_2,x_3$ are weight one coordinates
$y_1\dots y_6$ weight 2 coordinates and $c$ a general polynomial of weighted
degree 3. 
In this way we obtain a family of varieties $G_7$.
For each of them we have a degeneration, similar to the one described for $G_5$,
to the same variety $F_7$ in $\mathbb{P}(1^5, 2^6)$. This time $F_7$ is a
terminal Gorenstein toric Fano variety polarized by
$\mathcal{O}_{\mathbb{P}(1^5, 2^6)}(1)$ with polytope:
\begin{equation*}
 \begin{array}{lll}
 e_1= (-1,  -1,  0,  -1,  0,  0,  0), &
 e_2=   (0,  0,  -1,  0,  1,  1,  0), &
 e_3=   (0,  0,  -1,  1,  0,  0,  0), \\
 e_4=   (0,  0,  0,  0,  1,  0,  1), &
 e_5=   (0,  0,  0,  0,  1,  1,  1), &
 e_6=   (0,  0,  1,  0,  -1,  -1,  -1), \\
 e_7=   (0,  0,  1,  0,  -1,  0,  -1), &
 e_8=   (0,  1,  0,  0,  0,  0,  0), &
 e_9=   (1,  0,  -1,  0,  0,  0,  0), \\
 e_{10}=   (1,  0,  0,  0,  0,  0,  1) & &  
   \end{array}
\end{equation*}
The variety obtained as the intersection of 4 general Cartier divisors from the
system corresponding to $\mathcal{O}_{\mathbb{P}(1^5, 2^6)}(2)$ in $F_7$ is a
nodal Calabi--Yau threefold. Its resolution $\tilde{T}_7$ is a complete
intersection in a toric 
resolution $\tilde{F}_7$ of $F_7$ and is connected to $X_7$ by a conifold
transition.

\subsection{The family $\mathcal{X}_{10}$}

The Calabi-Yau threefold $X_{10}$ is described in $\mathbb{P}(1^6, 2)$ by
$4\times4$ Pfaffians of a $5\times5$ antisymmetric matrix with 
entries of degrees as shown below: 
 $$\left(\begin{array}{ccccc}
    &1&1&1&1\\
 & &2&2&2\\
 & & &2&2\\
 & & & &2\\
 & & & &\\
\end{array}\right)$$
We consider $X_{10}$ as complete intersection of hypersurfaces of weighted
degrees
$2, 2, 2, 2, 2$ and the variety $F_{10}\subset \mathbb{P}(1^6, 2^6)$ defined by
the
$4\times 4$ Pfaffians of the 
antisymmetric matrix with entries being coordinates of suitable degree. Now
similarly to the case $G(2, 5)$ we find a toric degeneration of $F_{10}$.
It is polarized by the restriction of $\mathcal{O}_{\mathbb{P}(1^6, 2^6)}(1)$
and corresponds to the polytope.
$$\begin{array}[c]{cccccccccc}
e_1=( &-1, & -1, & -1, & 0, & -1, & 0, & 0, & 0&), \\
e_2=( &0, & 0, & 0, & -1, & 0, & 1, & 0, & 0&), \\
e_3=( &0, & 0, & 0, & -1, & 1, & 0, & 0, & 0&), \\
e_4=( &0, & 0, & 0, & 0, & 0, & 0, & 0, & 1&),  \\
e_5=( &0, & 0, & 0, & 0, & 0, & 1, & 1, & 0&), \\
e_6=( &0, & 0, &  0, & 1, & 0, & -1, & -1, & -1&), \\
e_7=( &0, & 0, &  1, & 0, & 0, & 0, & 0, & 0&), \\
e_8=( &0, & 1, &  0, & 0, & 0, & 0, & 0, & 0&), \\
e_9=( &1, & 0, &  0, & 0, & 0, & 0, & 1, & 1&), \\
e_{10}=( &1, & 1,  & 0, & -1, & 0, & 0, & 0, & 0&), \\
e_{11}=( &1, & 1,  & 0, & 0, & 0, & 0, & 1, & 0&).  \\
\end{array}
$$
The variety obtained as the intersection of 5 general Cartier divisors from the
system corresponding to $\mathcal{O}_{\mathbb{P}(1^6, 2^6)}(2)$ in $F_{10}$ is a
nodal Calabi--Yau threefold. 
Its resolution $\tilde{T}_{10}$ is a complete intersection in a toric resolution
$\tilde{F}_{10}$ of $F_{10}$ and is connected to $X_{10}$ by a conifold
transition.

\subsection{The family $\mathcal{X}_{13}$} By the same method we can also treat
the Tonoli examples of degree 13.
A Calabi-Yau threefold $X_{13}$ from the family $\mathcal{X}_{13}$ is described
in $\mathbb{P}^6$ by $4\times4$
Pfaffians of a $5\times5$ antisymmetric matrix with 
entries of degrees as shown below: 
 $$\left(\begin{array}{ccccc}
    &2&2&2&2\\
 & &1&1&1\\
 & & &1&1\\
 & & & &1\\
 & & & &\\
\end{array}\right)$$
We consider $X_{13}$ as a complete intersection of 4 hypersurfaces of degrees
$2$  in $\mathbb{P}(1^7, 2^4)$ and the subvariety 
 $G_{13}\subset \mathbb{P}(1^7, 2^4)$ defined by $4\times4$
Pfaffians of a $5\times5$ antisymmetric matrix with 
entries being coordinates of suitable degree. Again we find a toric degeneration
of $G_{13}$ by the same method and in consequence a conifold transition from
$X_{13}$ to a Calabi--Yau threefold obtained as a complete intersection in a smooth 
toric variety.

\section{Mirror symmetry via toric degenerations}   In this section, we recall a
construction based on small toric degenerations
which is used to conjecturally predict the principal period of the mirror
family of a given Calabi-Yau threefold with Picard number 1. It is the original
method of Batyrev (\cite{Batsmalltoricdegen})
describing the principal period as a specialization 
of the hyper-geometric series associated with the toric resolution of the
degenerate Fano manifold.

In fact, using this method one obtains an explicit candidate for the mirror
family of $X_i$. More precisely,  the mirror family $T^*_i$ of $T_i$ is computed
explicitly  in terms of the construction of \cite{BatyrevBorisov}. 
To get an explicit description of the candidate mirror family for $X_i$ we use a
specialization of the family $T^*_i$ analogous to \cite[Conjecture
6.1.2]{BCKS}. 
It is conjectured that elements of this specialized family admit only nodes as
singularities and their small resolutions are mirrors to $X_i$.
Since we are unable to prove the conjecture on singularities in this case we
omit the details of the construction of the mirror family here and we
concentrate on the computation of its main period.

The method presented in \cite{Batsmalltoricdegen} is the following.
Let $X$ be a Calabi-Yau threefold appearing as a complete intersection of
Cartier divisors $D_1, \dots D_n$ in a Fano variety $G$ of dimension $n+3$.
Assume that $G$ admits a small degeneration to a toric variety $F$ i.e. a flat
degeneration such that $F$ is a terminal Gorenstein Fano variety and such that 
there is a canonical isomorphism between $\operatorname{Pic}(G)$
and $\operatorname{Pic}(F)$,  denote $\tilde{D}_1,  \dots,  \tilde{D}_n$ the
Cartier divisors on $F$ corresponding to $D_1, \dots D_n$ via this 
isomorphism.
Let $B=\{e_1, \dots,  e_k\}\subset N$ be the generators of one-dimensional cones
of the fan $\Sigma$ of $F$ in 
the dual lattice $N=\mathbb{Z}^n$. It is well known that the vectors
$\{e_1, \dots, e_k\}$ determine a set $\{E_1, \dots,  E_k\}$ of generators of 
the divisor class group. Assume that we have a subdivision of $B$ into $n$
disjoint sets $J_1, \dots, J_n$ such that $J_i$ corresponds to 
the Cartier Divisor $\tilde{D}_i$ for each $i\in\{1, \dots, n\}$ (i.e.
$D_i=\sum_{j\in J_i} E_j$). 
Let $L(B):=\{(l_1, \dots, l_k)\in \mathbb{Z}^k: \sum_{j=1}^k l_j e_j=0,  \quad
l_i\geq 0\}$. We then have a pairing between $L(B)$ and 
$\operatorname{Cl}(F)$ given by $<(l_1, \dots, l_k), E_j>=l_j$.
Finally we call $A(\Sigma)$ the set of vectors in $\mathbb{C}^{k}$ admissible
for the fan $\Sigma$ of i.e. vectors $(a_1, \dots, a_k)\in \mathbb{C}^k$ such
that there 
exists a function $\varphi$ on $\mathbb{C}^{n+3}$ linear restricted to each cone
of 
$\Sigma$ and such that $\varphi(e_i)=\log |a_i|$ for each $i\in\{1, \dots, k\}$.
Under this notation,  the main period of the mirror family to $X$ is conjectured
to be given by the formula:
\begin{equation} \label{rownanie} 
\phi_0(z)=\sum_{l\in L(B)} \frac{\prod_{i=1}^{n}(\sum_{j\in J_i} l_j)!}{l_1!
\dots l_k!} \prod_{j=1}^k z_j^{l_j}, 
\end{equation}
where $z\in A(\Sigma)$.
\begin{rem}
 Observe that in \cite{Batsmalltoricdegen}  the variety $G$ is assumed to be a
smooth Fano variety. However the method is conjectured to work for any conifold
transition between $X$ and a Calabi--Yau threefold
 obtained as a complete intersection in a terminal Gorenstein toric variety.
\end{rem}

\begin{ex}
The Calabi-Yau threefold $X_5$ admits a degeneration to a complete intersection
in a Gorenstein terminal toric Fano variety $F_5$ of dimension 10.
The Picard number of $F_5$ is 1 and the generator of the Picard group is very
ample.
The following decomposition of the set $B_5=\{e_1\dots e_{13}\}$ of rays of the
Fan $\Sigma_5$ of $F_5$ corresponds to 7 sections by elements of the system :
$J_1=\{e_1, e_2\}$,  $J_2=\{e_3, e_{13}\}$,  $J_3=\{e_4, e_6\}$,  $J_4=\{e_5,
e_9\}$,  $J_5=\{e_{7}, e_{8}, e_{10}\}$,  $J_6=\{e_{11}\}$,  $J_7=\{e_{12}\}$.
We compute also the cone $L(B_5)$ and find out that it is generated over
$\mathbb{Z}_{\geq 0}$ by vectors:
\begin{equation*}
 \begin{array}{cc}
 f_1=(1, 1, 1, 2, 2, 0, 0, 0, 0, 2, 2, 2, 1), &
 f_2=(1, 1, 1, 2, 0, 0, 0, 2, 2, 0, 2, 2, 1), \\
 f_3=(1, 1, 1, 0, 0, 2, 2, 0, 2, 0, 2, 2, 1), &
 f_4=(1, 1, 1, 2, 1, 0, 0, 1, 1, 1, 2, 2, 1), \\
 f_5=(1, 1, 1, 1, 1, 1, 1, 0, 1, 1, 2, 2, 1), &
 f_6=(1, 1, 1, 1, 0, 1, 1, 1, 2, 0, 2, 2, 1).
 \end{array}
\end{equation*}
It is hence a simplicial cone of dimension 3 spanned over a triangle with sides
having 3 points belonging to the lattice $\mathbb{Z}^k$ (2 vertices and the
midpoint).
We observe that the monomials corresponding to the six generators of $L(B_5)$
are equal in $A(\Sigma_5)=\{(z_1, \dots, z_{13})\in \mathbb{C}^{11} | z_5
z_{10}=z_8 z_9, \quad z_4 z_8=z_6z_7\}$. 
We can hence set a new coordinate $t=z^{f_i}$ which is independent on $i\in
\{1\dots6\}$ on $A(\Sigma_5)$. To make explicit the summation over $L(B_5)$ we
observe that every element of $P\in L(B_5)$
has a unique presentation as a sum $P=kf_1+lf_2+mf_3+nf_4+of_5+pf_6$ with $k, l,
m, n, o, p\in \mathbb{Z}_{\geq 0}$ and 
$|n|+|o|+|p|\leq 1$.
It follows that the conjectured formula for the main period of the mirror of
$\mathcal{X}_5$ is:
\begin{equation}
\phi_0(t)= \sum_{s=0}^{\infty} {2s \choose s}^2
\left(\sum_{\substack{k+l+m+n+o+p=s, \\  k, l, m, n, o, p\geq 0\\
|n|+|o|+|p|\in\{0, 1\}
}}  {2s \choose 2k+n+o}{2s \choose 2m+o+p}
{2s \choose 2m+o+p, 2l+n+p,  n+o+2k}\right) t^s,
\end{equation}
where, in the above and further in the paper, we use the notation ${a \choose b, c,  a-b-c}=\frac{a!}{b! c! (a-b-c)!}$.
We check that the corresponding Picard-Fuchs equation is the no. 302 from
\cite{vEvS}. It makes our result agree with the computation in \cite{Kanazawa}.
\end{ex}

\begin{ex}
The Calabi-Yau threefold $X_7$ has a degeneration to a nodal threefold appearing
as a complete intersection in a Gorenstein toric Fano variety $F_7$
of dimension 7.
The decomposition of the set of $B_7$ of rays of the fan $\Sigma_7$ is
$J_1=\{e_1, e_3\}$,  $J_2=\{e_4, e_6\}$,  $J_3=\{e_2, e_8, e_9\}$,  $J_4=\{e_5,
e_7, e_{10}\}$.
The cone $L(B_7)$ is generated by vectors:
\begin{equation*}
 \begin{split}
 f_1=(1, 0, 1, 0, 2, 2, 0, 1, 1, 0), 
 f_2=(1, 1, 1, 0, 1, 2, 0, 1, 0, 1), \\
 f_3=(1, 0, 1, 1, 1, 1, 1, 1, 1, 0), 
 f_4=(1, 1, 1, 1, 0, 1, 1, 1, 0, 1).
 \end{split}
\end{equation*}
\end{ex}
It is spanned over a parallelogram. We have 
$A(\Sigma_7)=\{(z_1, \dots, z_{13})\in \mathbb{C}^{11} | z_2 z_{10}=z_5 z_9,
\quad z_4 z_7=z_5z_6\}$ and the monomials corresponding to vectors $f_i$
generating $L(B_7)$ are equal on $A(\Sigma_7)$. Finally to make explicit the
summation over $L(B_7)$
we observe that if we denote the above  generators of $L(B_7)$ by $f_1\dots f_4$
then every point of $P\in L(B_7)$ has a unique presentation
$P=kf_1+lf_2+mf_3+nf_4$ with $k n=0$.
We get the formula for the main period 
\begin{equation}
\phi_0(t)= \sum_{s=0}^{\infty} {2s \choose s}\left(\sum_{\substack{k+l+m+n=s, \\
 k, l, m, n\geq 0\\ kn=0}}  {2s \choose m+n}{2s \choose s, k+m, l+n}
{2s \choose 2k+l+m, m+n,  l+n}\right) t^s 
\end{equation}
In this way we recover again the same result as \cite{Kanazawa} getting the
Picard -Fuchs equation to be no. 109 from \cite{vEvS}.
\begin{ex}
Consider our Calabi-Yau threefold $X_{10}$. As described in Section \ref{sec
constructions} it admits a degeneration to 
a nodal threefold appearing as a complete intersection in a Gorenstein terminal
toric Fano $F_{10}$ variety of dimension 8.
The decomposition of the set of rays $B_{10}$ of the Fan $\Sigma_{10}$ of
$F_{10}$ is
$J_1=\{e_1, e_3\}$,  $J_2=\{e_2, e_5\}$,  $J_3=\{e_4, e_8\}$,  $J_4=\{e_7, e_9,
e_{10}, e_{11}\}$,  $J_5=\{e_6\}$.
The cone $L(B_{10})$ is generated by vectors:
$$
(1, 1, 1, 2, 1, 2, 1, 0, 0, 0, 1), 
(1, 0, 1, 2, 2, 2, 1, 0, 0, 1, 0), 
(1, 1, 1, 1, 1, 2, 1, 1, 1, 0, 0).$$

It is hence a simplicial cone.
We also compute that $A(\Sigma_{10})=\{(z_1, \dots, z_{11})\in \mathbb{C}^{11} |
z_4 z_{11}=z_8 z_9, \quad z_2 z_{11}=z_5 z_{10}\}$.
We observe that the monomials corresponding to the three generators of
$L(B_{10})$ are equal in $A(\Sigma_{10})$. We can hence set a new coordinate 
$t:=z^{f_i}$ for $z\in A(\Sigma_{10})$ independent of $i\in\{1, 2, 3\}$.
Since every point of $P\in L(B_{10})$ has a unique presentation as
$P=kf_1+lf_2+mf_3$,  with $k, l, m\in \mathbb{Z}_{\geq 0}$. 
The main period is then given by the formula:
\begin{equation}
\phi_0(t)= \sum_{s=0}^{\infty} {2s \choose s}\left(\sum_{\substack{k+l+m=s, \\ 
k, l, m\geq 0}}  {2s \choose k+m}{2s \choose m}
{2s \choose m}{2s-m \choose k} {2s -k-m \choose l}\right) t^s 
\end{equation}
This corresponds to the Picard-Fuchs equation no. 263 from \cite{vEvS} as stated
in \cite{Kanazawa}.
\end{ex}
\begin{rem}
A similar computation holds for $X_{13}$ recovering the Picard-Fuchs equation no
99 from \cite{vEvS}.
\end{rem}
\section{The family $\mathcal{X}_{25}$}
We consider the family of Calabi--Yau threefolds of degree 25 in a separate
section because its description is slightly different from the description of
earlier studied varieties.
It involves two sets of Pfaffian equations. More precisely 
$\mathcal{X}_{25}$ is the family of Calabi-Yau threefold of degree 25 obtained
as transversal intersections of
two Grassmannians $G(2, 5)$ embedded by Pl\"ucker embeddings in $\mathbb{P}^9$.
Its equations are given by $4\times4$ Pfaffians of two generic $5\times 5$
matrices of linear forms.
We would like to deform both Grassmannians simultaneously and obtain a toric
variety as the result of their intersection. This might be impossible to do.
However,  we can set up a picture in which 
both deformations do not interfere with each other and the result is really a
toric variety. For this we consider $X_{25}$ as a complete intersection of 10
hyperplanes in $\mathbb{P}^{19}$ 
with the subvariety $G_{25}\subset \mathbb{P}^{19}$ defined by $4\times 4$
Pfaffians of two 
$5\times 5$ skew symmetric matrices with entries being two disjoint sets of
coordinates. This means that $G_{25}$ is the intersection of two cones $C_1$ and
$C_2$ over Grassmannians $G(2, 5)$ centered in two disjoint $\mathbb{P}^9\subset
\mathbb{P}^{19}$.   
We find a toric degeneration of $G_{25}$,  by degenerating each of the cones
$C_1$ and $C_2$ by means of the standard degeneration of varieties given by
Pfaffians of a $5\times 5$ matrix introduced above.
More precisely let $\mathcal{C}_1, \mathcal{C}_2 \subset \mathbb{P}^{19}\times
\mathbb{C}$ be the families obtained by multiplying the corner entries of the
matrices by $\lambda\in \mathbb{C}$.
We then have the following:
\begin{prop}\label{flat deg X25}
The intersection $\mathcal{C}_1\cap \mathcal{C}_2 \subset \mathbb{P}^{19}\times
\mathbb{C}$ is flat over $\mathbb{C}$. Moreover its fiber over zero denoted by
$F_{25}$ is a Gorenstein terminal toric Fano variety $F_{25}$ of Picard number
one.
\end{prop}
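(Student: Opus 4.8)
The plan is to reduce the whole statement to the single-matrix Pfaffian degeneration already analysed in Proposition \ref{flat and fan}, exploiting the crucial feature that the two skew-symmetric matrices defining $G_{25}$ involve two \emph{disjoint} sets of coordinates. Write $\mathbb{P}^{19}=\operatorname{Proj}\mathbb{C}[x_0,\dots,x_9,y_0,\dots,y_9]$, let $I_1^\lambda$ be the ideal of $4\times 4$ Pfaffians of the first ($\lambda$-deformed) matrix, involving the variables $x$ only, and let $I_2^\lambda$ be the analogous ideal in the variables $y$. Since the generators of $I_1^\lambda$ and $I_2^\lambda$ use disjoint variables, the coordinate ring of $\mathcal{C}_1\cap\mathcal{C}_2$ over $\mathbb{C}[\lambda]$ splits as a tensor product
\begin{equation*}
\mathbb{C}[x,y,\lambda]/(I_1^\lambda+I_2^\lambda)\;\cong\;\bigl(\mathbb{C}[x,\lambda]/I_1^\lambda\bigr)\otimes_{\mathbb{C}[\lambda]}\bigl(\mathbb{C}[y,\lambda]/I_2^\lambda\bigr).
\end{equation*}
Geometrically this says that $G_{25}$, and indeed each fibre of $\mathcal{C}_1\cap\mathcal{C}_2$, is the \emph{join} in $\mathbb{P}^{19}$ of the corresponding fibres of the two one-matrix families, so that its affine cone is the product of the two affine cones. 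This is the precise form of the assertion that the two deformations do not interfere.

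First I would establish flatness. Each family $\mathcal{C}_i$ is the cone analogue of the family $\mathcal{G}$ of Proposition \ref{flat and fan} (it is the cone over the standard degeneration of $G(2,5)$ of \cite{Sturm, BCKS}, with the corner entries multiplied by $\lambda$), hence flat over $\mathbb{C}$; equivalently $\mathbb{C}[x,\lambda]/I_1^\lambda$ and $\mathbb{C}[y,\lambda]/I_2^\lambda$ are flat $\mathbb{C}[\lambda]$-modules. Since a tensor product of flat $\mathbb{C}[\lambda]$-modules is flat, the displayed isomorphism shows at once that $\mathcal{C}_1\cap\mathcal{C}_2$ is flat over $\mathbb{C}$. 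Concretely, under the join the graded Hilbert series multiply, and each factor's Hilbert series is independent of $\lambda$ by flatness of the individual Pfaffian degeneration, so the Hilbert series of the intersection is constant in $\lambda$; in particular $F_{25}$ is of the expected dimension $13$, the same as $G_{25}$.

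Next I would identify $F_{25}=(\mathcal{C}_1\cap\mathcal{C}_2)_0$. Setting $\lambda=0$ turns the Pfaffians of each matrix into the binomials of the toric degeneration of $G(2,5)$, so $I_1^0+I_2^0$ is a binomial ideal and $F_{25}$ is the join of the two normal toric degenerations of $G(2,5)$. Its affine cone is the product of the two affine toric cones, whence $F_{25}$ is irreducible and normal (a product of normal varieties over $\mathbb{C}$ is normal); being defined by binomials it is therefore a toric variety, exactly as in the proof of Proposition \ref{flat and fan}. It is Gorenstein because it is the complete intersection of the two Gorenstein cones $C_1,C_2$, equivalently because a tensor product of Gorenstein rings is Gorenstein. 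It then remains to verify that $F_{25}$ is a terminal Fano toric variety of Picard number one; following the pattern of Proposition \ref{flat and fan} I would feed the fan determined by the binomial equations into the Toric Package of Magma and check reflexivity of the polytope (Fano and Gorenstein), the terminality condition on its lattice points, and $\operatorname{rank}\operatorname{Pic}(F_{25})=1$.

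The conceptual content is entirely contained in the join / tensor-product structure, which forces properness of the intersection and reduces flatness, normality and the Gorenstein property to facts about a single $5\times 5$ matrix already available from Proposition \ref{flat and fan}. The step I expect to be the main obstacle, and the least formal, is the terminality of $F_{25}$: unlike normality or the Gorenstein property it is a delicate condition on the interior lattice points of the join polytope and is not automatically inherited from the two factors, so it is precisely here that the explicit Magma computation on the fan of $F_{25}$ is indispensable.
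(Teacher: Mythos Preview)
Your proposal is correct and follows essentially the same approach as the paper: both exploit the disjoint-variable/join structure to reduce to the single $5\times5$ Pfaffian degeneration, deduce flatness from constancy (multiplicativity) of Hilbert polynomials, obtain the toric structure from the binomial equations of the central fibre, and defer terminality and Picard number one to a Magma computation on the resulting polytope. The only minor difference is that the paper verifies normality by bounding the codimension of the singular locus (a point of $F_{25}$ is singular iff it is singular on $\hat C_1$ or $\hat C_2$), whereas you invoke the cleaner fact that a product of normal $\mathbb{C}$-varieties is normal; both arguments are valid and rest on the same disjoint-coordinate observation.
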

\begin{proof}
It is clear that $\mathcal{C}_1\cap \mathcal{C}_2$ is an algebraic family. Its
fiber $F_{25}$ over 0 is the intersection of two cones $\hat{C}_1$ and
$\hat{C}_2$ constructed similarly to $C_1$ and $C_2$ but over small
degenerations of the Grassmannians $G(2, 5)$.
Since $\hat{C}_1$ and $\hat{C}_2$ are both Gorenstein as Pfaffian varieties it
follows that $F_{25}$ is a Gorenstein variety of codimension 6. Its Hilbert
polynomial is just the product of Hilbert polynomials of $\hat{C}_1$ and
$\hat{C}_2$ which are equal to the
Hilbert polynomials of $C_1$ and $C_2$. It follows that the family
$\mathcal{C}_1\cap \mathcal{C}_2$ is flat. We next prove that $F_{25}$ is
normal. For this it is enough to compute the codimension of its singular locus.
For this observe that by construction,  
since the equation of $\hat{C}_1$ and $\hat{C}_2$ involve disjoint sets of
coordinates,  a point on $F_{25}$ is singular if and only if it is a singular
point of $\hat{C}_1$ or $\hat{C}_2$. Since the centers of the cones are of high
codimension this implies 
that the singular locus of $F_{25}$ is of codimension 3.  
Hence $F_{25}$ is normal. We moreover have a set of binomial equations defining
$F_{25}$. It follows that $F_{25}$ is a Gorenstein toric variety.  
The polytope of $F_{25}$ is generated by the set of rays $B_{25}$ consisting of
the following:
\begin{equation*} 
\begin{array}{c}
e_1=(-1,  -1,  -1,  -1,  -1,  -1,  -1,  -1,  -1,  -1,  -1,  -1,  -1), \\
\begin{array}{cc}
 e_2=      (0,  0,  0,  0,  0,  0,  0,  0,  0,  0,  0,  0,  1), &
 e_3=      (0,  0,  0,  0,  0,  0,  0,  0,  0,  0,  0,  1,  0), \\
e_4=       (0,  0,  0,  0,  0,  0,  0,  0,  0,  0,  1,  0,  0), &
 e_5=      (0,  0,  0,  0,  0,  0,  0,  0,  0,  1,  0,  0,  1), \\
 e_6=      (0,  0,  0,  0,  0,  0,  0,  0,  0,  1,  1,  0,  0), &
 e_7=      (0,  0,  0,  0,  0,  0,  0,  0,  1,  0,  0,  1,  0), \\
 e_8=      (0,  0,  0,  0,  0,  0,  0,  0,  1,  1,  1,  0,  0), &
 e_9=      (0,  0,  0,  0,  0,  0,  0,  1,  0,  0,  0,  0,  0), \\
 e_{10}=   (0,  0,  0,  0,  0,  0,  1,  0,  0,  0,  0,  0,  0), &
 e_{11}=   (0,  0,  0,  0,  0,  1,  0,  0,  0,  0,  0,  0,  0), \\
 e_{12}=   (0,  0,  0,  0,  1,  0,  0,  0,  0,  0,  0,  0,  0), &
 e_{13}=   (0,  0,  0,  1,  0,  0,  0,  0,  0,  0,  0,  0,  0), \\
 e_{14}=   (0,  0,  1,  0,  0,  1,  0,  0,  0,  0,  0,  0,  0), &
 e_{15}=   (0,  0,  1,  1,  0,  0,  0,  0,  0,  0,  0,  0,  0), \\
 e_{16}=   (0,  1,  0,  0,  1,  0,  0,  0,  0,  0,  0,  0,  0), &
 e_{17}=   (0,  1,  1,  1,  0,  0,  0,  0,  0,  0,  0,  0,  0), \\
 e_{18}=   (1,  0,  0,  0,  0,  0,  0,  0,  0,  0,  0,  0,  0).&  \\
 \end{array}
\end{array}
\end{equation*}
It follows that $F_{25}$ is terminal and of Picard number one.
\end{proof}
\begin{rem}
Observe that this set of rays can be thought of as two sets of rays each
describing the standard degeneration $P(2,5)$ of the Grassmannian $G(2, 5)$. 
\end{rem}

From the proof of Proposition \ref{flat deg X25} we deduce that $F_{25}$ has 4
codimension 3 singular toric strata obtained by the intersection given by the 2
codimension 3 toric strata in each $\hat{C}_i$ for $i=1, 2$. 
The variety obtained as the intersection of 10 general Cartier divisors from the
system corresponding to $\mathcal{O}_{\mathbb{P}^{19}}(1)$ in $F_{25}$ is hence
a nodal Calabi--Yau threefold. It admits a small
resolution $\tilde{T}_{25}$ being a complete intersection in a toric resolution
$\tilde{F}_{25}$ of $F_{25}$. It follows that $\tilde{T}_{25}$ is connected to
$X_{25}$ by a conifold transition.

\begin{rem}
 The fact that $T_{25}$ is nodal follows directly form the part of the proof of
\ref{flat deg X25} describing the singularities in codimension 3 from which we
can easily deduce the local type in a general point of each of these
singularities.
\end{rem}

The Calabi-Yau threefold $X_{25}$ thus has a degeneration to a nodal threefold
appearing as a complete intersection of ten hyperplane sections in a Gorenstein
terminal toric Fano variety $F_{25}$
of dimension 13. The decomposition of the set of rays $B_{25}$ of the fan
$\Sigma_{25}$ of $F_{25}$ is
$J_1=\{e_1\}$, $J_2=\{e_9\}$, $J_3=\{e_{10}\}$, $J_4=\{e_{18}\}$,  $J_5=\{e_2,
e_4\}$,  $J_6=\{e_3, e_7\}$,  $J_7=\{e_{11}, e_{13}\}$,  $J_8=\{e_{12},
e_{16}\}$,  
$J_9=\{e_{5}, e_{6}, e_{8}\}$,  $J_{10}=\{e_{14}, e_{15}, e_{17}\}$.
The Cone $L(B_{25})$ is generated by:
\begin{equation*}
 \begin{array}{cc}
 f_1=(1, 0, 0, 1, 1, 0, 1, 0, 1, 1, 0, 0, 1, 1, 0, 1, 0, 1), &
 f_2=(1, 1, 0, 0, 0, 1, 1, 0, 1, 1, 0, 0, 1, 1, 0, 1, 0, 1), \\
 f_3=(1, 1, 1, 0, 0, 0, 0, 1, 1, 1, 0, 0, 1, 1, 0, 1, 0, 1), &
 f_4=(1, 0, 0, 1, 1, 0, 1, 0, 1, 1, 1, 0, 0, 0, 1, 1, 0, 1), \\
 f_5=(1, 1, 0, 0, 0, 1, 1, 0, 1, 1, 1, 0, 0, 0, 1, 1, 0, 1), &
 f_6=(1, 1, 1, 0, 0, 0, 0, 1, 1, 1, 1, 0, 0, 0, 1, 1, 0, 1), \\
 f_7=(1, 0, 0, 1, 1, 0, 1, 0, 1, 1, 1, 1, 0, 0, 0, 0, 1, 1), &
 f_8=(1, 1, 0, 0, 0, 1, 1, 0, 1, 1, 1, 1, 0, 0, 0, 0, 1, 1), \\
 f_9=(1, 1, 1, 0, 0, 0, 0, 1, 1, 1, 1, 1, 0, 0, 0, 0, 1, 1)
 \end{array}
\end{equation*}
Observe that if we denote the following vectors in $\mathbb{Z}^9$ 
 $$(1, 0, 0, 1, 1, 0, 1, 0, 1), (1, 1, 0, 0, 0, 1, 1, 0, 1), (1, 1, 1, 0, 0, 0,
0, 1, 1), $$
 by $g_1$,  $g_2$,  $g_3$ respectively. Then any point of $P\in
L(B_{25})\subset\mathbb{Z}^{18}=\mathbb{Z}^9\times \mathbb{Z}^9$ can be written
in a unique way as $P=(kg_1+lg_2+mg_3, ng_1+og_2+ge_3)$ with $k+l+m=n+o+p$.
 We moreover have  $A(\Sigma_{25})=\{(z_1, \dots, z_{18})\in \mathbb{C}^{18} |
z_2 z_{6}=z_4 z_5, \quad z_3 z_{8}=z_6 z_{7}, \quad,  z_{11} z_{15}=z_{13}
z_{14}, \quad z_{12} z_{17}=z_{15} z_{16}\}$.
 In this way we get the following formula for the period of the mirror of
$X_{25}$.
 
$$  \phi_0(t)= \sum_{s=0}^{\infty} \left(\sum_{\substack{k+l+m=s \\n+o+p=s\\ k,
l, m, n, o, p\geq 0}}  {s \choose k}{s \choose m}{s \choose k, l, m}{s\choose
n}{s\choose p}{s\choose n, o, p}
\right) t^s$$
The latter implies 
$$\phi_0(t)=\sum_{s=0}^{\infty} \left(\sum_{\substack{k+l+m=s\\  k, l, m\geq 0}}
 {s \choose k}{s \choose m}{s \choose k, l, m}\right)^2 t^s.
$$
 
The corresponding Picard--Fuchs equation is no 101 in \cite{vEvS}. Indeed,  the
invariants of $X_{25}$ fit with the predicted (in \cite{vEvS}) invariants of a
hypothetical Calabi--Yau threefold of Picard number one with this equation 
describing the period of its mirror.  
\begin{rem} Observe that in the above the vectors $g_i$ are the generators of
the cone $L(B_{P(2, 5)})$ computed for the standard small toric degeneration
$P(2, 5)$ of the Grassmannian $G(2, 5)$. 
\end{rem}

\begin{rem}
The approach to the case $\mathcal{X}_{25}$ seams to work for the computation of
the main period of the mirror family of any Calabi-Yau threefold (or of a Landau
Ginzburg model of any Fano manifold) obtained as a transversal intersection of 
two Fano varieties admitting Gorenstein terminal toric degenerations. More
precisely let $X$ and $Y$ be two Fano manifolds in $\mathbb{P}^{N}$ intersecting
transversely in $Z$. Assume that $X$ and $Y$ admit small toric degenerations 
$T_{X}\subset \mathbb{P}^N$ and $T_{Y}\subset \mathbb{P}^N$. Let $C_{X}$ and
$C_{Y}$ be cones in $\mathbb{P}^{2N+1}$ over $X$ and $Y$ respectively with
vertices being disjoint $\mathbb{P}^N$'s in $\mathbb{P}^{2N+1}$. Observe that
$Z$ is
a complete intersection of $N$ hyperplane sections of $C_{X}\cap C_{Y}$. Then by
analogous proof to Proposition \ref{flat deg X25} we get the intersection of the
cones $C_{T_X} \cap C_{T_Y}$ (defined in the same way as  $C_{X}$ and $C_{Y}$
but 
spanned over $T_X$ and $T_Y$) and $N+1$ hyperplanes is a conifold degeneration
$T_Z$ of $Z$. Moreover $C_{T_X} \cap C_{T_Y}$ is a toric variety with fan
constructed in terms of the fans of $T_X$ and $T_Y$. It means, in particular,
that the set of 
rays $B_{T_Z}$ can be decomposed into two parts one corresponding to rays
$B_{T_X}$ and the other to rays of $B_{T_Y}$. In this way the Cone $L(B_{T_Z})$
will 
be interpreted as the intersection of the products of cones $L(B_{T_X})\times
L(B_{T_Y})$ with a hyperplane. Since the decomposition $J$ of the set of rays
can be done accordingly to the decomposition onto two parts,  at the end we
obtain the 
coefficients of the main period of the mirror of $Z$ to be products of
coefficients of two series each obtained by the naive application of the Batyrev
formula \ref{rownanie}
to suitable Calabi--Yau (not necessarily 3 dimensional) complete intersections
in $X$ and $Y$.
\end{rem}

\begin{rem}
It is interesting to observe that the above is consistent with another method of
computation of the main period of the mirror.
The latter method is based on Przyjalkowski constructions of weak
Landau-Ginzburg models for smoothings of Gorenstein terminal toric Fano
varieties and the quantum Lefschetz formula in such manifolds.
For more details see \cite{PrzWeakLGFano1}. More precisely in this method we
consider the polytope of $F_i$ and associate to it a Laurent polynomial
$\mathcal{P}$. To the latter we associate its constant term series
$I_{\mathcal{P}}$ and use 
the formula \cite[Cor 4.2.2.]{PrzWeakLGFano68} for the computation of the main
period of $X_i$. In each case we obtain the same result as above even if our
variety $F_i$ has no smoothing. In fact the lack of smoothing may be solved by
means of 
\cite{PrzWeakLGFanosingtor}. More precisely $F_i$ has a partial smoothing $G_i$
whose resolution does not affect $X_i$ we can hence for our purposes work on
$G_i$ as if it was smooth.   
 
\end{rem}
Note also that there is one more method that should lead to a construction of a mirror family for Calabi--Yau manifolds from $\mathcal{X}_{25}$.
It is based on the following. 
\begin{prop} There exists a smooth family of Calabi--Yau manifolds over a disc $\Delta$ whose general element is an element
 of the family $\mathcal{X}_{25}$ whereas the central element (over $0$)  is a zero locus of a  general section of the vector bundle $\mathcal{Q}^*(2)$ over $G(2,5)$. Here $\mathcal{Q}$ is the universal quotient bundle on $G(2,5)$.
\end{prop}
\begin{proof} Observe that element of $\mathcal{X}_{25}$ are pfaffian varieties on $G(2,5)$ associated to the bundle  $E=5\mathcal{O}_{G(2,5)}$ i.e. they are codimension 3 submanifolds described as degeneracy loci of skew symmetric maps $E^*(-1)\to E$. 
Consider now the universal exact sequence
$$0\rightarrow \mathcal{U}\rightarrow 5\mathcal{O}_{G(2,5)}\rightarrow \mathcal{Q}\rightarrow 0.$$
By  \cite[prop. 7.2]{CYP6} (we need to change the base from $\mathbb{P}^6$ to $G(2,5)$, the proof remaining unchanged) there exists a flat family whose general element is a pfaffian variety associated to the bundle $E$ as above whereas the special element is a general pfaffian variety associated to the bundle $\mathcal{U}\oplus \mathcal{Q}$. Now since $\wedge^2 \mathcal{U}(1)=\mathcal{O}_{G(2,5)}$ the degeneracy locus of a general skew symmetric map $(\mathcal{U}\oplus \mathcal{Q})^*(-1)\to \mathcal{U}\oplus \mathcal{Q}$ is equal to a pfaffian locus of some skew-symmetric map $\mathcal{Q}^*(-1)\to \mathcal{Q}$. Finally since $\mathcal{Q}$ is of rank 3 the latter is  the zero locus of the corresponding section $\wedge^2 \mathcal{Q}(1)=\mathcal{Q}^*(2)$. The zero locus of such a section is indeed a smooth Calabi-Yau threefold since $\mathcal{Q}^*(2)$ is globally generated and $c_1(\mathcal{Q}^*(2))=3h$ with $h$ the hyperplane class in the Pl\"ucker embedding.
\end{proof}
\begin{rem} Note that the varieties from the family $\mathcal{X}_{25}$ are themselves not zero loci of sections of the vector bundle $\mathcal{Q}^*(2)$. In fact, by dimension count the dimension of the family $\mathcal{X}_{25}$ is bigger than the dimension of the space of sections of $\mathcal{Q}^*(2)$. More precisely, the family of special Calabi--Yau threefolds being zero loci form a divisor in the deformation space of $\mathcal{X}_{25}$ .
\end{rem}

\subsection*{Acknowledgments}
 We would like to thank J. Buczy\'nski,   A. Kanazawa,  G. Kapustka,  Ch.
Okonek,  V. Przyjalkowski,  D. van Straten for discussions and answering
questions related to the subject of this paper.
 The project was supported MNSiW,  N N201 414539 and by the Forschungskredit of
the University of Zurich.

\bigskip

\bibliographystyle{alpha}
\bibliography{biblio}
\vskip 2cm

\begin{minipage}{15cm}
Department of Mathematics and Informatics,\\ Jagiellonian
University, {\L}ojasiewicza 6, 30-348 Krak\'{o}w, Poland.\\
\end{minipage}

\begin{minipage}{15cm}
Institut f\"ur Mathematik\\
Mathematisch-naturwissenschaftliche Fakult\"at\\
Universit\"at Z\"urich, Winterthurerstrasse 190, CH-8057 Z\"urich\\
\end{minipage}

\begin{minipage}{15cm}
Institutt for matematikk og naturvitenskap\\
Det teknisk- naturvitenskapelige fakultet\\
Universitetet i Stavanger, 4036 Stavanger, Norway\\
\end{minipage}

\begin{minipage}{15cm}
\emph{E-mail address:} michal.kapustka@uj.edu.pl
\end{minipage}

\end{document}